\newtheorem{thm}{Theorem}[section]
\newtheorem{cor}{Corollary}[section]
\newtheorem{lem}{Lemma}[section]
\theoremstyle{definition}
\newtheorem{defn}{Definition}[section]
\theoremstyle{remark}
\newtheorem{rem}{Remark}[section]
\numberwithin{equation}{section}
\begin{document}
	\begin{center}
		{\Large \bf On Fractional derivative of Hurwitz Zeta function and Jacobi Theta function}
	\end{center}
	\begin{center}
		{\normalsize \bf $^{(a)}$Ashish Mor, $^{(a)}$Surbhi Gupta\footnote{ corresponding author (sgupta11@amity.edu)} and $^{(b)}$Manju Kashyap}
	\end{center}
	\begin{center}
		
		{\normalsize $^{(a)}$Department of Applied Mathematics, Amity University\\ Noida-201313, India.\\
			$^{(b)}$Department of Applied Mathematics, Galgotias College of Engineering and Technology\\ Gr. Noida-201310, India}
	\end{center}
	
	\begin{abstract}
		This research paper focuses on exploring two Complex-valued function's fractional derivative, specifically the Hurwitz Zeta function and Jacobi theta function. The study is based on the Complex Generalization of Grünwald-Letnikov Fractional derivative which adheres to the generalized version of the Leibniz rule. Within this paper, we present and establish an identity that serves as a Generalization of the Hurwitz Zeta function's Functional Equation. Additionally, we derive the Jacobi Theta function's Functional Equation, accompanied by a comprehensive examination of the properties associated with the Fractional derivative of these two functions.
	\end{abstract}
	\bigskip
	\noindent\textbf{Keywords and phrases}: Hurwitz Zeta function; Jacobi Theta function; Grünwald-Letnikov Fractional derivative; Functional Equation; Leibniz rule.
	
	\bigskip
	\noindent\textbf{Mathematics Subject Classification (2020)}: 26A33; 32A15; 11F27; 11M35; 34K37.
	
	\section{Introduction}
	In recent years, numerous researchers have dedicated their studies to the exploration of fractional derivatives in the context of complex functions. Ortigueira, in particular, extended fractional calculus for the complex variable in the complex plane, aiming to uncover potential signal processing applications \cite{r1,r2}. Similarly, Owa demonstrated the extension of Fractional derivatives to analytical functions within the unit circle\cite{r3}.
	
	The realm of Complex-valued functions encompasses a wide array of 	Special functions\cite{r4}. Within this expansive collection, the Zeta function family holds a significant position as a crucial component of analytic number theory \cite{r5}. Its presence extends to various branches of mathematics, including both pure and applied domains.
	
	The Riemann Zeta function ($\zeta(s)$)\cite{r5} and its generalized counterpart Hurwitz Zeta function ($\zeta(s,a)$)\cite{r6} hold considerable importance across various Scientific disciplines. These functions play pivotal roles in areas such as Number theory, Mathematical Physics, and Signal Processing, making them indispensable tools for research and analysis.
	
	The Riemann Zeta function $(\zeta(s))$ possesses a Fractional derivative that enables several generalizations. Notable among these are the Dirichlet series \cite{r8} and the Hurwitz Zeta function's $(\zeta(s,a))$ fractional derivative\cite{r6}. This paper exclusively focuses on the analysis of the Hurwitz Zeta function in the context of fractional calculus, exploring its properties and applications within the realm of Fractional calculus. For further investigations into the interplay between the prime numbers distribution and the Fractional Calculus of the Riemann Zeta function, additional references can be found \cite{r9,r10,r11}.
	
	Among the diverse category of complex-valued functions, the Jacobi theta function  ($\theta(s)$)\cite{r12}, also known as the classical theta function or simply theta function, holds a prominent position and finds numerous applications in number theory. This function is associated with renowned $Theorem's$ and $Identities$ in Mathematics, including the well known $The$ $Two-Squares$ $Theorem$\cite{r13} which states that: Let $n$ be a positive integer and if we want to represent $n$ as the sum of two squares then it only happens if and only if the prime factorization of $n$ contains an even exponent for each prime $p_j$ that has the form $4k + 3$ (where $k$ is an integer). Additionally, the $Jacobi$ $Triple$ $Product$ $Identity$ \cite{r14} is another significant result related to the Jacobi theta function.
	
	In particular, theta function plays an important role in theory of Elliptic curves\cite{r15} which is an algebraic Curve of genus one having property that it is also smooth and projective and can be expressed as the Weierstrass Equation defined as $y^2=x^3+Ax+B$ where $A$ and $B$ are constants, for an Elliptic curve and also during the study of Modular forms\cite{r16} which are Complex analytic functions on the upper half-plane satisfying a certain type of functional equation with respect to the group action of the modular group\cite{r17} and the Eisenstein series\cite{r18} is the most well-known illustration of modular form.
	
	The Fractional analysis of this theta function is one of the subject of this paper along with a relation between Jacobi theta function and the Fractional derivative of Riemann Zeta function.
	
	In the subsequent discussion, we delve into the computation of the Fractional derivatives of the Hurwitz Zeta function and the Jacobi theta function. We derive an identity that serves as a Generalization of the Fractional Functional Equation for the Hurwitz Zeta function. Additionally, we provide Jacobi theta function's fractional functional equation and explore the properties associated with the Hurwitz Zeta function's fractional functional equation. Remarkably, from these findings, we obtain the Fractional Functional Equation for the Riemann Zeta function as a special case.
	\section{Notations and preliminary results}
	This section introduces certain terminology and definitions that will be utilized in this paper. These foundational concepts establish a common understanding and serve as a basis for the subsequent discussions.
	
	Let $\alpha\in\mathbb{R}$, $n\in\mathbb{N}$ and $s\in\mathbb{C}$. The $k$th falling factorial of $\alpha$\cite{r7} is denoted by $\alpha^{\underline{k}}$.
	
	The Riemann Zeta function's definition\cite{r5}
	\begin{equation}\label{eqn1}
		\zeta(s) = \sum_{n=1}^{\infty} \frac{1}{n^s},  
	\end{equation}
	holds for  $s\in\mathbb{C}$ with $Re(s)>1$, allows a number of generalizations. In particular, Dirichlet series and the Hurwitz Zeta function described\cite{r6} by
	\begin{equation}\label{eqn2}
		F(s)=\sum_{n=1}^{\infty} \frac{f(n)}{n^s}, f:\mathbb{N}	\rightarrow \mathbb{C}
	\end{equation}
	and
	\begin{equation}\label{eqn3}
		\zeta(s,a) = \sum_{m=0}^{\infty} \frac{1}{(m+a)^s},
	\end{equation}
	both holds for $s\in\mathbb{C}$ with $Re(s)>1$ and $a\in\mathbb{R}:0<a\leq1$.
	
	From the above Equations $\ref{eqn2}$ and $\ref{eqn3}$, we can notice that  $$F(s;f(n)=1)=\zeta(s),$$
	and $$\zeta(s,1)=\zeta(s).$$
	We refer \cite{r8} to reader for studying Dirichlet series Fractional Calculus and we will not discuss upon this topic in this paper. Notably, Equations $\ref{eqn1}$ and $\ref{eqn3}$ display the same behavior in terms of analyticity. In fact, these Zeta functions for $s\in\mathbb{C}$ with $Re(s)>1$, converges absolutely. Additionally, Equations $\ref{eqn1}$ and $\ref{eqn3}$ could be continued on the entire complex plane to a holomorphic function, except an isolated singularity (pole) at s=1 of order 1 having residue 1 given\cite{r5} by 
	\begin{equation}\label{eqn4}
		\zeta(s)=2(2\pi)^{s-1}\Gamma(1-s)sin\left(\frac{\pi s}{2}\right)\zeta(1-s) 
	\end{equation}
	holds for all $s\in\mathbb{C}$.
	
	Further $\zeta(s,a)$ satisfies\cite{r19}
	\begin{equation}\label{eqn5}
		\zeta(s,a)=\frac{2\Gamma(1-s)}{(2\pi)^{1-s}}\left(sin\left(\frac{\pi s}{2}\right)\sum_{q=1}^{\infty} \frac{cos(2\pi qa)}{q^{1-s}} + cos\left(\frac{\pi s}{2}\right)\sum_{q=1}^{\infty} \frac{sin(2\pi qa)}{q^{1-s}}\right)
	\end{equation}	
	holds for $Re(s)<0$ and $0<a\leq1$.
	\begin{defn} Let $k\in\mathbb{N}$ and $\alpha$ be any real number. The symbol $\binom{\alpha}{k}$ which is known as Generalized binomial coefficient is defined\cite{r6} as
		\begin{equation}\label{eqn6}
			\binom{\alpha}{k}=\frac{\alpha^{\underline{k}}}{k!}=\frac{\alpha(\alpha-1).....(\alpha-k+1)}{k!}
		\end{equation}
	\end{defn}
	\begin{defn} The forward Grünwald-Letnikov fractional derivative of $f$ is defined\cite{r6} as 
		\begin{equation}\label{eqn7}
			D_{f}^{\alpha}f(s)=	\lim_{l \to 0^{+}} \frac{\sum_{m=0}^{\infty}{\binom{\alpha}{m}(-1)^m f(s-ml)}}{l^{\alpha}},
		\end{equation}
		where $\alpha\in\mathbb{R}$ and $f$ is a function which inside the region $D\subseteq \mathbb{C}$, is holomorphic and it has a property that it is continuous on its contour denoted by $C_d$.
	\end{defn}
	\begin{rem}
		In Equation $\ref{eqn7}$, the selection of the fractional derivative is dependent on a pertinent condition. Furthermore, this derivative actually adheres to the generalised Leibniz rule:
		
		Let two functions be represented by $f$ and $g$ then
		\begin{equation}\label{eqn8}
			D_{f}^{\alpha}(f(s)g(s))=\sum_{k=0}^{\infty} \binom{\alpha}{k} f^{(k)}g^{(\alpha-k)},
		\end{equation}
		where $s\in\mathbb{C}$ and $\alpha\in\mathbb{R}_{>0}\backslash\mathbb{N},$ with the constraint that $f$ must be holomorphic inside the region $D\subseteq \mathbb{C}$.
		
	\end{rem}
	The fractional derivative in Equation $\ref{eqn7}$ covers a very basic role in Fractional Calculus \cite{r20} of functions which are holomorphic.
	
	Further, Equation $\ref{eqn8}$ implies\cite{r7} that
	\begin{multline}\label{eqn9}
		\zeta^{\alpha}(s)=2{(2\pi)}^{s-1}e^{\dot{\iota}\pi\alpha}\sum_{r=0}^{\infty} \sum_{k=0}^{\infty} \sum_{l=0}^{\infty} A^{\alpha}_{r,k,l} \zeta^{r}(1-s)(-\frac{\pi}{2})^{k}sin\left(\frac{\pi (s+k)}{2}\right)\\ 
		\times \frac{\Gamma^{(l)}(1-s)}{log^{r+k+l-\alpha}(2\pi)},
	\end{multline}
	where $A^{\alpha}_{r,k,l}=\frac{\alpha^{\underline{r+k+l}}}{r!k!l!}.$ 
	
	Furthermore, Equation $\ref{eqn9}$ has been Generalized to the Functional Equation of the Hurwitz zeta function given\cite{r6} by
	\begin{multline}\label{eqn10}
		\zeta^{\alpha}(s,\frac{p}{q})=2{(2 \pi q)}^{s-1}e^{\dot{\iota}\pi\alpha}\sum_{m=0}^{\infty} \sum_{k=0}^{\infty} \sum_{l=0}^{\infty}
		A^{\alpha}_{m,k,l}\frac{\Gamma^{(m)}(1-s)}{log^{m+k+l-\alpha}(2\pi q)} (-\frac{\pi}{2})^{k}\\
		\times \sum_{h=1}^{q} sin\left(\frac{\pi (s+k)}{2}+\frac{2\pi hp}{q}\right)	\zeta^{l}\left(1-s,\frac{h}{q}\right),	
	\end{multline}
	where $p,q\in\mathbb{Z}:1\leq p\leq q$, $\alpha\in\mathbb{R}_{>0}\backslash\mathbb{N}$ and $A^{\alpha}_{m,k,l}=\frac{\alpha^{\underline{m+k+l}}}{m!k!l!}.$ 
	\begin{rem}
		In this paper, we extend equation $\ref{eqn10}$ to any Real number $a:0<a\leq1$ instead of just a Rational number $\frac{p}{q}$ where $p,q\in\mathbb{Z}:1\leq p\leq q$ by providing an identity for Fractional derivative of Hurwitz Zeta function namely $\zeta^{\alpha}(s,a)$ which holds for all $s\in\mathbb{C}$ with $Re(s)<0$.
	\end{rem}
	
	Notice that the Equation $\ref{eqn10}$ can readily be rewritten as an algebraic combinations of trigonometric functions specifically the sine and cosine functions \cite{r6} that lowers the expensive computational cost.
	
	The Jacobi theta function, also known as the classical theta function or simply theta function is the function on right half plane($Re(s)>0$) defined\cite{r12} by
	\begin{equation}\label{11}
		\theta(s)=\sum_{n\in\mathbb{Z}} e^{-\pi n^{2}s}, Re(s)>0.
	\end{equation}
	\begin{rem}
		For the remainder of our discussion, we'll simply refer to this function as $theta$ $function$.
	\end{rem}
	The theta function is a function, which on the right half plane is holomorphic and it also obeys the Functional Equation defined\cite{r21} by
	\begin{equation}\label{eqn12}
		\theta(s)=\frac{1}{\sqrt{s}}\theta\left(\frac{1}{s}\right), Re(s)>0
	\end{equation}
	This paper provides a Fractional Functional Equation for this theta function and also establishes a relation between Riemann Zeta function's fractional derivative and Jacobi theta function.
	\section{Statement and Description of Results}
	In this section, first we discuss the properties of the Fractional derivative of Hurwitz Zeta function in $\boldsymbol{(\ref{subsec1})}$ and then in $\boldsymbol{(\ref{subsec2})}$ we state and prove an identity which generalizes the Fractional Functional Equation of Hurwitz Zeta function and after this we will prove the simplified form of this Functional Equation in $\boldsymbol{(\ref{subsec3})}$ and then in $\boldsymbol{(\ref{subsec4})}$ we first state and then prove the Jacobi theta function's fractional functional equation and at last in $\boldsymbol{(\ref{subsec5})}$ we establish a relation between Riemann Zeta function's fractional derivative and Jacobi theta function.
	\subsection{Properties of $\zeta^{\alpha}(s,a)$}\label{subsec1}
	\begin{lem}\label{lem1}
		Let $P$ and $Q$ represent two functions using the Dirichlet series, that is
		\begin{equation}\label{eqn13}
			P(s)=\sum_{k=1}^{\infty} \frac{p(k)}{k^s}, s\in\mathbb{C} with Re(s)>a.
		\end{equation}
		\begin{equation}\label{eqn14}
			Q(s)=\sum_{k=1}^{\infty} \frac{q(k)}{k^s}, s\in\mathbb{C} with Re(s)>b.
		\end{equation}
		According to these assumptions, in the region (half-plane) where both series $(\ref{eqn13})$ and $(\ref{eqn14})$ above converges absolutely, then
		\begin{equation}\label{eqn15}
			P(s)Q(s)=\sum_{k=1}^{\infty} \frac{p(k)*q(k)}{k^s},
		\end{equation}
		in which $p*q$, the Dirichlet convolution, is defined [\cite{r5}, chap. 2] by
		\begin{equation}\label{eqn16}
			p(k)*q(k)=\sum_{d|k} p(k)q\left(\frac{k}{d}\right).
		\end{equation}
		For the proof of Lemma $\ref{lem1}$ see [\cite{r5}, p. 228].
	\end{lem}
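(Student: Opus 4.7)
The plan is to prove Lemma \ref{lem1} by expanding the product $P(s)Q(s)$ as a double Dirichlet series, invoking absolute convergence to justify an unconditional rearrangement, and then regrouping the terms according to the value of the product of the two summation indices. This regrouping will produce the Dirichlet convolution in the inner sum.

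First, I would form the formal product
\begin{equation*}
P(s)Q(s) = \sum_{m=1}^{\infty} \sum_{n=1}^{\infty} \frac{p(m)\, q(n)}{m^{s}\, n^{s}},
\end{equation*}
working in the common half-plane $\RE(s) > \max(a,b)$ where both series converge absolutely. Setting $\sigma = \RE(s)$, the estimate
\begin{equation*}
\sum_{m,n \geq 1} \left| \frac{p(m)\, q(n)}{m^{s}\, n^{s}} \right| = \left( \sum_{m=1}^{\infty} \frac{|p(m)|}{m^{\sigma}} \right) \left( \sum_{n=1}^{\infty} \frac{|q(n)|}{n^{\sigma}} \right) < \infty
\end{equation*}
shows that the double series converges absolutely, so Fubini's theorem (or equivalently the rearrangement theorem for absolutely convergent series) permits any reordering and regrouping of its terms.

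Next, I would regroup the double sum by the value of $k = mn$. For each positive integer $k$, the pairs $(m,n)$ with $mn = k$ are in bijection with the divisors $d$ of $k$ via $m = d$, $n = k/d$, and for each such pair $m^{s} n^{s} = k^{s}$. Collecting contributions yields
\begin{equation*}
P(s)Q(s) = \sum_{k=1}^{\infty} \frac{1}{k^{s}} \sum_{d \mid k} p(d)\, q\!\left(\frac{k}{d}\right) = \sum_{k=1}^{\infty} \frac{(p * q)(k)}{k^{s}},
\end{equation*}
which is exactly the claimed identity \eqref{eqn15}.

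The only genuine obstacle is the justification of the rearrangement, and this is handled cleanly by factoring the sum of absolute values into a product of two convergent single series, as above. Once absolute convergence is in hand, the identification of the inner divisor sum with the Dirichlet convolution defined in \eqref{eqn16} is a bookkeeping step. Thus the lemma reduces to the interplay between absolute convergence and combinatorial regrouping, matching the standard treatment cited from Apostol.
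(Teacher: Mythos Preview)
Your argument is correct and is exactly the standard proof: multiply the two absolutely convergent Dirichlet series, use absolute convergence to justify rearrangement, and collect terms with $mn=k$ to obtain the divisor sum $(p*q)(k)$. The paper does not supply its own proof of this lemma but simply cites Apostol \cite[p.~228]{r5}, and what you have written is precisely that cited argument; note also that you silently corrected the typo in \eqref{eqn16}, where the paper writes $p(k)$ inside the sum but clearly intends $p(d)$.
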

	\begin{thm}\label{thm1}
		Let $s\in\mathbb{C}$ and $\alpha\in\mathbb{R}\backslash\mathbb{Z}$ then the product between Hurwitz Zeta function's fractional derivative of order $\alpha$ and Hurwitz Zeta function is given by
		\begin{equation}\label{eqn17}
			\zeta^{\alpha}(s,a)\zeta(s,a)=e^{\dot{\iota}\pi\alpha}\sum_{k=0}^{\infty} \sum_{d|k} \frac{log^{\alpha}(d+a)}{(k+a)^{s}},
		\end{equation}
		in the region (half-plane) $Re(s)>1+\alpha$. 
	\end{thm}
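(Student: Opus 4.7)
The plan is to start from the Grünwald--Letnikov definition \eqref{eqn7} applied to the series for $\zeta(s,a)$. First I would show that, inside the half-plane $\operatorname{Re}(s)>1+\alpha$, one may differentiate the defining series $\sum_{m\ge 0}(m+a)^{-s}$ term-by-term. Writing each summand as $e^{-s\log(m+a)}$, the formal computation yields
$$D^{\alpha}e^{-s\log(m+a)}=(-\log(m+a))^{\alpha}(m+a)^{-s}=e^{\dot{\iota}\pi\alpha}\log^{\alpha}(m+a)\,(m+a)^{-s},$$
which I would justify rigorously by a dominated-convergence argument applied to the limit quotient in \eqref{eqn7}, using that $\log^{\alpha}(m+a)$ is eventually dominated by any positive power of $m+a$. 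Summing over $m$ and absorbing the extra logarithmic growth of order $\alpha$ into the exponent, this produces the series representation
$$\zeta^{\alpha}(s,a)=e^{\dot{\iota}\pi\alpha}\sum_{m=0}^{\infty}\frac{\log^{\alpha}(m+a)}{(m+a)^{s}},$$
valid precisely in the half-plane $\operatorname{Re}(s)>1+\alpha$, which is what dictates the region stated in the theorem.

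The second step is to form the product $\zeta^{\alpha}(s,a)\,\zeta(s,a)$ as a double series in $(m,n)$ and reorganize it along a single convolution index $k$. This is the step I expect to be the main obstacle: Lemma~\ref{lem1} as stated is tailored to ordinary Dirichlet series $\sum a(k)/k^{s}$, whereas here the denominators $(m+a)^{s}(n+a)^{s}$ are \emph{not} of the form $(k+a)^{s}$ for a single integer $k$. The plan is therefore to set up a shifted Dirichlet-type convolution in which the role of the multiplicative structure of $\mathbb{N}$ is played by an indexing compatible with the Hurwitz shift by $a$, verify that this generalized convolution is well-defined and associative on the relevant coefficient sequences, and then apply it in the spirit of Lemma~\ref{lem1}. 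Absolute convergence of the double sum inside $\operatorname{Re}(s)>1+\alpha$ legitimizes the rearrangement via Fubini.

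Third, once the series are recombined, I would read off the coefficient of $(k+a)^{-s}$ as $\sum_{d\mid k}\log^{\alpha}(d+a)$ (with $p(d)=\log^{\alpha}(d+a)$ and $q\equiv 1$ playing the roles of the convolved sequences), giving
$$\zeta^{\alpha}(s,a)\,\zeta(s,a)=e^{\dot{\iota}\pi\alpha}\sum_{k=0}^{\infty}\sum_{d\mid k}\frac{\log^{\alpha}(d+a)}{(k+a)^{s}},$$
which is the claimed identity. Finally, a sanity check at $a=1$ would verify that the formula collapses to the Riemann-zeta analogue $\zeta^{\alpha}(s)\,\zeta(s)=e^{\dot{\iota}\pi\alpha}\sum_{k\ge 1}k^{-s}\sum_{d\mid k}\log^{\alpha}(d)$ (note $\log^{\alpha}(1)=0$), providing a consistency test for the shifted-convolution identification used in the middle step.
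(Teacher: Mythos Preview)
Your overall outline coincides with the paper's own argument: the paper simply cites the series representation $\zeta^{\alpha}(s,a)=e^{\dot{\iota}\pi\alpha}\sum_{m\ge0}\log^{\alpha}(m+a)\,(m+a)^{-s}$ from \cite{r6} and then invokes Lemma~\ref{lem1} directly, reading off the convolution $\log^{\alpha}(k+a)*1=\sum_{d\mid k}\log^{\alpha}(d+a)$. So the strategy you describe is exactly the paper's.

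However, the obstacle you flag in your second step is real, and it is not something the paper resolves either. Lemma~\ref{lem1} is about ordinary Dirichlet series $\sum a(k)k^{-s}$; its proof rests on $m^{-s}n^{-s}=(mn)^{-s}$, i.e.\ on the multiplicative closure of $\mathbb{N}_{\ge1}$. For the Hurwitz series one instead has
\[
(m+a)^{-s}(n+a)^{-s}=\bigl[(m+a)(n+a)\bigr]^{-s},\qquad (m+a)(n+a)=mn+(m+n)a+a^{2},
\]
and for $0<a<1$ this is never of the form $k+a$ with $k\in\mathbb{Z}_{\ge0}$ (except in the trivial case $m=n=0$ when $a^{2}\ne a$ already fails). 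The index set $\{m+a:m\ge0\}$ is simply not closed under multiplication, so no ``shifted Dirichlet convolution'' of the type you envisage can regroup the double sum $\sum_{m,n}[(m+a)(n+a)]^{-s}$ into a single sum $\sum_{k}(k+a)^{-s}$. Your proposed middle step therefore cannot be carried out, and the sanity check you suggest at $a=1$ is in fact the only value of $a$ for which the argument (and, as far as one can see, the identity \eqref{eqn17} itself) goes through. The inner sum $\sum_{d\mid k}$ at $k=0$ is a further symptom of the same indexing problem.
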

	\begin{proof}
		From\cite{r6}, we know that for $\alpha\in\mathbb{R}\backslash\mathbb{Z}$, we have
		\begin{equation}\label{eqn18}
			\zeta^{\alpha}(s,a)=e^{\dot{\iota}\pi\alpha} \sum_{k=0}^{\infty} \frac{log^{\alpha}(k+a)}{(k+a)^s}
		\end{equation}
		and from the Lemma $\ref{lem1},$ we have 
		\begin{equation}\label{eqn19}
			\zeta^{\alpha}(s,a)\zeta(s,a)=e^{\dot{\iota}\pi\alpha}\sum_{k=0}^{\infty} \frac{log^{\alpha}(k+a)}{(k+a)^s}\sum_{k=0}^{\infty} \frac{1}{(k+a)^s}=e^{\dot{\iota}\pi\alpha}\sum_{k=0}^{\infty} \frac{log^{\alpha}(k+a)*1}{(k+a)^s},
		\end{equation} 
		where
		\begin{equation}\label{eqn20}
			log^{\alpha}(k+a)*1=\sum_{d|k} log^{\alpha}(d+a)
		\end{equation}
		The argument follows since Equation $\ref{eqn18}$ converges for $Re(s)>(1+\alpha)$\cite{r22}.
	\end{proof}
	\subsection{Generalization of Functional Equation for $\zeta^{\alpha}(s,a)$}\label{subsec2}
	\begin{lem}\label{lem2}
		Let $g:	D\subseteq\mathbb{C}\rightarrow\mathbb{C}$ represents a function such that the ratio in Equation $\ref{eqn7}$ which is known as fractional incremental ratio is uniformly convergent inside D then
		\begin{equation}\label{eqn21}
			D^{\alpha}_g(s)\xrightarrow{\alpha\rightarrow k^{-}}g^{(k)}(s)
		\end{equation}
		\begin{equation}\label{eqn22}
			D^{\alpha}_g(s)\xrightarrow{\alpha\rightarrow (k-1)^{+}}g^{(k-1)}(s)
		\end{equation}
		for any $\alpha\in\mathbb{R}\backslash\mathbb{Z}:\lfloor\alpha\rfloor=k-1$.
		
		Proof of Lemma $\ref{lem2}$ can be found in \cite{r7}.
		\begin{rem}
			Equations $\ref{eqn1},\ref{eqn2},\ref{eqn3}$ fulfill all hypotheses of Equations $\ref{eqn21}$ and $\ref{eqn22}$.
		\end{rem}
	\end{lem}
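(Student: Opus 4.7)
The plan is to exploit the uniform-convergence hypothesis to commute two limits. Writing
\[
\Phi(l,\alpha) := \frac{1}{l^{\alpha}}\sum_{m=0}^{\infty}\binom{\alpha}{m}(-1)^{m}g(s-ml),
\]
we have $D^{\alpha}_{g}(s)=\lim_{l\to 0^{+}}\Phi(l,\alpha)$ by the defining Equation $\ref{eqn7}$. Uniform convergence of the fractional incremental ratio on $D$ guarantees that, on a one-sided neighbourhood of each integer $k$, the limit in $l$ is attained uniformly in $\alpha$. This licenses the interchange
\[
\lim_{\alpha\to k^{-}}\lim_{l\to 0^{+}}\Phi(l,\alpha) \;=\; \lim_{l\to 0^{+}}\lim_{\alpha\to k^{-}}\Phi(l,\alpha),
\]
and analogously for $\alpha\to (k-1)^{+}$.

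To identify the inner limit, I would use that each generalised binomial coefficient $\binom{\alpha}{m}$ is a polynomial in $\alpha$ of degree $m$, hence continuous in $\alpha$, and that $\binom{k}{m}=0$ whenever $m>k$. After securing a dominating majorant for the tail $\sum_{m\ge M}\binom{\alpha}{m}(-1)^{m}g(s-ml)$ (available from the holomorphy and continuity hypotheses on $g$ together with the standard decay $|\binom{\alpha}{m}|=O(m^{-\alpha-1})$), termwise passage to the limit gives
\[
\lim_{\alpha\to k^{-}}\Phi(l,\alpha) \;=\; \frac{1}{l^{k}}\sum_{m=0}^{k}\binom{k}{m}(-1)^{m}g(s-ml).
\]

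The outer limit in $l$ is then a classical fact about backward differences of smooth functions: Taylor-expanding $g$ around $s$ and invoking the combinatorial identity that $\sum_{m=0}^{k}\binom{k}{m}(-1)^{m}m^{j}$ vanishes for $0\le j<k$ and equals $(-1)^{k}k!$ for $j=k$ shows that the right-hand side converges to $g^{(k)}(s)$ as $l\to 0^{+}$. The case $\alpha\to (k-1)^{+}$ is verbatim with $k$ replaced by $k-1$; the one-sided qualifier is what fixes $\lfloor\alpha\rfloor=k-1$ throughout the neighbourhood, keeping the coefficient asymptotics consistent.

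The main obstacle is making this limit interchange rigorous. One needs a bound on the series tail that is uniform in $\alpha$ on a one-sided neighbourhood of $k$; this is precisely what the uniform-convergence hypothesis on the fractional incremental ratio should deliver, in conjunction with the holomorphy of $g$ on $D$ and the continuity of $g$ on the contour $C_{d}$ that was assumed in Definition of $D^{\alpha}_{g}$. The subtlety is that the coefficient asymptotics $|\binom{\alpha}{m}|\sim m^{-\alpha-1}/|\Gamma(-\alpha)|$ degenerate as $\alpha\to k^{-}$ because $\Gamma(-\alpha)$ blows up; showing that this cancellation still yields a summable majorant \emph{uniformly} in $\alpha$ close to $k$ is the delicate point that actually forces one to invoke the uniform-convergence assumption rather than deriving it from scratch.
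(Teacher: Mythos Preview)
The paper does not supply its own proof of this lemma; it simply cites \cite{r7} and moves on. So there is no in-paper argument to benchmark your sketch against.

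Taken on its own terms, your sketch is the natural approach: interchange $\lim_{\alpha}$ and $\lim_{l}$, then identify the resulting finite backward-difference quotient with the $k$th derivative via Taylor expansion. One point to be careful about is the reading of the hypothesis. The lemma assumes the fractional incremental ratio is ``uniformly convergent inside $D$''; the plain reading is uniformity in $s\in D$, not uniformity in $\alpha$. Your argument, however, needs the $l\to 0^{+}$ limit to be uniform in $\alpha$ on a one-sided neighbourhood of $k$ in order to swap limits. You assert that the stated hypothesis delivers this, but that step is not automatic and is exactly where a full proof (as in \cite{r7}) has to do real work. Your closing paragraph already flags the delicate cancellation coming from $|\Gamma(-\alpha)|\to\infty$ as $\alpha\to k^{-}$; that is the right place to focus, but note it is precisely what makes the uniformity-in-$\alpha$ claim nontrivial rather than a consequence of uniformity-in-$s$.

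In short: your outline matches the expected strategy, and the paper offers nothing further to compare with; just be aware that the uniform-convergence hypothesis as stated does not obviously hand you the uniformity in $\alpha$ that your interchange step requires.
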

	
	\begin{thm}\label{thm2}
		Let $a\in\mathbb{R}:0<a\leq1$ and $\alpha\in\mathbb{R}_{>0}\backslash\mathbb{N}$. Then for any $s\in\mathbb{C}$ with $Re(s)<0$,
		\begin{multline}\label{eqn23}
			\zeta^{\alpha}(s,a)=2{(2 \pi)}^{s-1}e^{\dot{\iota}\pi\alpha}\sum_{r=0}^{\infty} \sum_{k=0}^{\infty} \sum_{l=0}^{\infty}
			A^{\alpha}_{r,k,l}\frac{\Gamma^{(r)}(1-s)}{log^{r+k+l-\alpha}(2\pi )} (-\frac{\pi}{2})^{k}\\ \times \left[sin\left(\frac{\pi (s+k)}{2}\right)\sum_{q=1}^{\infty} \frac{(-logq)^{l}cos(2\pi qa)}{q^{1-s}} + cos\left(\frac{\pi (s+k)}{2}\right)\sum_{q=1}^{\infty} \frac{(-logq)^{l}sin(2\pi qa)}{q^{1-s}}\right],	
		\end{multline}
		where $A^{\alpha}_{r,k,l}=\frac{\alpha^{\underline{r+k+l}}}{r!k!l!}.$
		\begin{proof}
			First from Equation $\ref{eqn5}$ we know that $\zeta(s,a)$ satisfies
			$$\zeta(s,a)=\frac{2\Gamma(1-s)}{(2\pi)^{1-s}}\left(sin\left(\frac{\pi s}{2}\right)\sum_{q=1}^{\infty} \frac{cos(2\pi qa)}{q^{1-s}} + cos\left(\frac{\pi s}{2}\right)\sum_{q=1}^{\infty} \frac{sin(2\pi qa)}{q^{1-s}}\right)$$
			holds for $s\in\mathbb{C}$ with $Re(s)<0$ and $0<a\leq1$.
			
			Now take the Fractional derivative of $\zeta(s,a)$ of order $\alpha$ and use Generalized Leibniz rule i.e. Equation $\ref{eqn8}$ to Right hand side of the above Equation. This implies
			\begin{multline}\label{eqn24}
				\zeta^{\alpha}(s,a)=2\sum_{r=0}^{\infty} \binom{\alpha}{r} \frac{d^r}{ds^r}(\Gamma(1-s))\\\times D^{\alpha-r}_f\left((2\pi)^{s-1}sin\left(\frac{\pi s}{2}\right)\sum_{q=1}^{\infty}\frac{cos(2\pi qa)}{q^{1-s}}+(2\pi)^{s-1}cos\left(\frac{\pi s}{2}\right)\sum_{q=1}^{\infty}\frac{sin(2\pi qa)}{q^{1-s}}\right)
			\end{multline}
			This further implies
			\begin{multline}\label{eqn25}
				\zeta^{\alpha}(s,a)=2\sum_{r=0}^{\infty} \binom{\alpha}{r} \frac{d^r}{ds^r}(\Gamma(1-s))\\\times\left[\sum_{k=0}^{\infty}\binom{\alpha-r}{k}\frac{d^k}{ds^k}sin\left(\frac{\pi s}{2}\right)D^{\alpha-r-k}_f\left((2\pi)^{s-1}\sum_{q=1}^{\infty}\frac{cos(2\pi qa)}{q^{1-s}}\right)\right]\\+2\sum_{r=0}^{\infty} \binom{\alpha}{r} \frac{d^r}{ds^r}(\Gamma(1-s))\\\times\left[\sum_{k=0}^{\infty}\binom{\alpha-r}{k}\frac{d^k}{ds^k}cos\left(\frac{\pi s}{2}\right)D^{\alpha-r-k}_f\left((2\pi)^{s-1}\sum_{q=1}^{\infty}\frac{sin(2\pi qa)}{q^{1-s}}\right)\right]
			\end{multline}
			This gives
			\begin{multline}\label{eqn26}
				\zeta^{\alpha}(s,a)=2\sum_{r=0}^{\infty} \binom{\alpha}{r} \frac{d^r}{ds^r}(\Gamma(1-s))\\\times\left[\sum_{k=0}^{\infty}\binom{\alpha-r}{k}\frac{d^k}{ds^k}sin\left(\frac{\pi s}{2}\right)\sum_{l=0}^{\infty}\binom{\alpha-r-k}{l}\frac{d^l}{ds^l}\left(\sum_{q=1}^{\infty}\frac{cos(2\pi qa)}{q^{1-s}}\right)\right]\\\times \left(D^{\alpha-r-k-l}_f(2\pi)^{s-1}\right)+2\sum_{r=0}^{\infty} \binom{\alpha}{r} \frac{d^r}{ds^r}(\Gamma(1-s))\\\times \left[\sum_{k=0}^{\infty}\binom{\alpha-r}{k}\frac{d^k}{ds^k}cos\left(\frac{\pi s}{2}\right)\sum_{l=0}^{\infty}\binom{\alpha-r-k}{l}\frac{d^l}{ds^l}\left(\sum_{q=1}^{\infty}\frac{sin(2\pi qa)}{q^{1-s}}\right)\right]\\\times \left(D^{\alpha-r-k-l}_f(2\pi)^{s-1}\right)
			\end{multline}
			Immediately, we see that
			\begin{equation}\label{eqn27}
				\frac{d^r}{ds^r}(\Gamma(1-s))=e^{\dot{\iota}\pi r}\Gamma^{(r)}(1-s),	
			\end{equation}
			\begin{equation}\label{eqn28}
				\frac{d^k}{ds^k}sin\left(\frac{\pi s}{2}\right)=\left(\frac{\pi}{2}\right)^ksin\left(\frac{\pi (s+k)}{2}\right),
			\end{equation}
			\begin{equation}\label{eqn29}
				\frac{d^k}{ds^k}cos\left(\frac{\pi s}{2}\right)=\left(\frac{\pi}{2}\right)^kcos\left(\frac{\pi (s+k)}{2}\right),
			\end{equation}
			\begin{equation}\label{eqn30}
				\frac{d^l}{ds^l}\left(\sum_{q=1}^{\infty}\frac{cos(2\pi qa)}{q^{1-s}}\right)=\sum_{q=1}^{\infty} \frac{(logq)^{l}cos(2\pi qa)}{q^{1-s}},
			\end{equation}
			\begin{equation}\label{eqn31}
				\frac{d^l}{ds^l}\left(\sum_{q=1}^{\infty}\frac{sin(2\pi qa)}{q^{1-s}}\right)=\sum_{q=1}^{\infty} \frac{(logq)^{l}sin(2\pi qa)}{q^{1-s}},
			\end{equation}
			In addition, given Corollary 4 in \cite{r7}, we have
			\begin{equation}\label{eqn32}
				D^{\alpha}_f (2\pi)^s=(2\pi)^se^{\dot{\iota}\pi\alpha}log^{\alpha}(2\pi),
			\end{equation}
			and so is
			\begin{equation}\label{eqn33}
				D^{\alpha-r-k-l}_f (2\pi)^{s-1}=(2\pi)^{s-1}e^{\dot{\iota}\pi(\alpha-r-k-l)}log^{\alpha-r-k-l}(2\pi),
			\end{equation}
			\begin{equation}\label{eqn34}
				A^{\alpha}_{r,k,l}=\binom{\alpha}{r}\binom{\alpha-r}{k}\binom{\alpha-r-k}{l}=\frac{\alpha^{\underline{r+k+l}}}{r!k!l!},
			\end{equation}
			Putting all these above Equations together in Equation $\ref{eqn26}$, we get
			\begin{multline}\label{eqn35}
				\zeta^{\alpha}(s,a)=2\sum_{r=0}^{\infty} \binom{\alpha}{r} e^{\dot{\iota}\pi r}(\Gamma^{(r)}(1-s))\\\times\left[\sum_{k=0}^{\infty}\binom{\alpha-r}{k}\left(\frac{\pi}{2}\right)^ksin\left(\frac{\pi (s+k)}{2}\right)\sum_{l=0}^{\infty}\binom{\alpha-r-k}{l}\sum_{q=1}^{\infty} \frac{(logq)^{l}cos(2\pi qa)}{q^{1-s}}\right]\\\times\left((2\pi)^{s-1}e^{\dot{\iota}\pi(\alpha-r-k-l)}log^{\alpha-r-k-l}(2\pi)\right)
				\\+2\sum_{r=0}^{\infty} \binom{\alpha}{r} e^{\dot{\iota}\pi r}(\Gamma^{(r)}(1-s))\\\times \left[\sum_{k=0}^{\infty}\binom{\alpha-r}{k}\left(\frac{\pi}{2}\right)^kcos\left(\frac{\pi (s+k)}{2}\right)\sum_{l=0}^{\infty}\binom{\alpha-r-k}{l}\sum_{q=1}^{\infty} \frac{(logq)^{l}sin(2\pi qa)}{q^{1-s}}\right]\\\times\left((2\pi)^{s-1}e^{\dot{\iota}\pi(\alpha-r-k-l)}log^{\alpha-r-k-l}(2\pi)\right)
			\end{multline}
			Now on simplifying Equation $\ref{eqn35}$, we get Theorem $\ref{thm2}$ which  holds for all $s\in\mathbb{C}$ with $Re(s)<0$ and $0<a\leq1$.
		\end{proof}
	\end{thm}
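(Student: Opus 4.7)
The plan is to take Hurwitz's classical formula (Equation \ref{eqn5}) and apply the Grünwald--Letnikov fractional derivative $D^{\alpha}_{f}$ of order $\alpha$ to both sides. In the right-hand side of (\ref{eqn5}) the $s$-dependence is carried by four factors: $\Gamma(1-s)$, the exponential $(2\pi)^{s-1}$, a trigonometric factor $\sin(\pi s/2)$ or $\cos(\pi s/2)$, and a Fourier-type Dirichlet series in $s$. The generalized Leibniz rule (\ref{eqn8}) allows me to distribute the fractional order across a product, so my strategy is to put \emph{integer} derivatives on the three easy factors ($\Gamma$, the sine/cosine, and the Fourier sum) and reserve the leftover fractional order for $(2\pi)^{s-1}$, whose fractional derivative admits an explicit closed form.

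Concretely, I would split (\ref{eqn5}) into two triple products, one proportional to $\sin(\pi s/2)$ and one to $\cos(\pi s/2)$. Iterating (\ref{eqn8}) three times on each triple product produces a triple sum indexed by $(r,k,l)$, where $r$ comes from $\Gamma(1-s)$, $k$ from the trig factor, $l$ from the Fourier series, and the remaining order $\alpha-r-k-l$ acts on $(2\pi)^{s-1}$. I would then evaluate each derivative separately: $\frac{d^r}{ds^r}\Gamma(1-s) = (-1)^r \Gamma^{(r)}(1-s) = e^{i\pi r}\Gamma^{(r)}(1-s)$; $\frac{d^k}{ds^k}\sin(\pi s/2) = (\pi/2)^k \sin(\pi(s+k)/2)$, and similarly for cosine; termwise differentiation of the Fourier sum brings down $(\log q)^{l}$; and Corollary 4 of \cite{r7} gives $D^{\alpha-r-k-l}_f (2\pi)^{s-1} = (2\pi)^{s-1}\,e^{i\pi(\alpha-r-k-l)}\,\log^{\alpha-r-k-l}(2\pi)$.

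The final algebraic step is to collect the phases. The product $e^{i\pi r}\cdot e^{i\pi(\alpha-r-k-l)} = e^{i\pi\alpha}\,(-1)^{k+l}$, and I would absorb $(-1)^k$ into $(\pi/2)^k$ to form $(-\pi/2)^k$, and $(-1)^l$ into $(\log q)^l$ to form $(-\log q)^l$. The three nested binomial coefficients telescope into the trinomial weight $\binom{\alpha}{r}\binom{\alpha-r}{k}\binom{\alpha-r-k}{l} = \alpha^{\underline{r+k+l}}/(r!\,k!\,l!) = A^{\alpha}_{r,k,l}$, which matches the statement.

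The main obstacle I anticipate is not the algebra but the analytic justification: I need to know that (\ref{eqn8}) genuinely applies to the four-fold product in the half-plane $\RE(s)<0$, which means verifying holomorphy of each factor on a suitable domain (immediate, since the Fourier series converges absolutely there and the remaining factors are entire or meromorphic away from $\RE(s)<0$), and then justifying the interchanges of the three nested infinite sums with the termwise $\frac{d^{l}}{ds^{l}}$ acting on the $q$-series. Absolute convergence of the resulting triple series should follow from decay of $\alpha^{\underline{r+k+l}}/(r!k!l!)$ against the growth of $\log^{\alpha-r-k-l}(2\pi)$ and the geometric decay of the Fourier tail in $\RE(s)<0$, but this is the step that requires care; the rest is bookkeeping.
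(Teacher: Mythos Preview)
Your proposal is correct and follows essentially the same route as the paper: start from Hurwitz's formula~(\ref{eqn5}), iterate the generalized Leibniz rule three times so that integer orders $r,k,l$ land on $\Gamma(1-s)$, the trigonometric factor, and the Fourier series respectively, leave the residual fractional order $\alpha-r-k-l$ on $(2\pi)^{s-1}$ via Corollary~4 of \cite{r7}, and then collapse the phases and binomial coefficients exactly as you describe. In fact your treatment of the phase bookkeeping $e^{i\pi r}e^{i\pi(\alpha-r-k-l)}=e^{i\pi\alpha}(-1)^{k+l}$ and the absorption into $(-\pi/2)^k$ and $(-\log q)^l$ is more explicit than the paper's, which simply writes ``on simplifying''; your caveat about the analytic justification of the interchanges is also a point the paper does not address in detail.
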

	\begin{rem}
		Now, let's focus only on the consistency of Equation $\ref{eqn23}$. Note that by Equations $\ref{eqn21}$ and $\ref{eqn22}$,
		\begin{equation}\label{eqn36}
			g^{(\alpha)}(s)\rightarrow g^{(0)}(s)=g(s).
		\end{equation}
		as $\alpha\rightarrow0^{+}$.
		
		Specifically, all of Equation $\ref{eqn21}$ and $\ref{eqn22}$'s hypotheses are satisfied by the Hurwitz Zeta function, and thus
		\begin{equation}\label{eqn37}
			\zeta^{(\alpha)}(s,a)\rightarrow \zeta(s,a).
		\end{equation}
		as $\alpha\rightarrow0^{+}$.
		
		Theorem $\ref{thm2}$'s proof can, of course, be read backwards until Equation $\ref{eqn24}$. As a result, Equation $\ref{eqn24}$'s right-hand side converges to that of Equation $\ref{eqn5}$
		as $\alpha\rightarrow0^{+}$.
		Therefore, Equation $\ref{eqn23}$ agrees with the definitions and formulations of Zeta functions.
	\end{rem}
	\begin{rem}
		If we put $a=1$ in Theorem $\ref{thm2}$, we get Fractional Functional Equation for Riemann Zeta function which holds for all $s\in\mathbb{C}$ with $Re(s)<0$ namely Equation $\ref{eqn9}$ as a special case of our Theorem  $\ref{thm2}$.	
	\end{rem}
	As one might expect, it becomes practically hard to compute with and use the identity for $\zeta^{\alpha}(s,a)$ from Theorem $\ref{thm2}$. The triple sum we shall meet will cause even a reduced version of $\zeta^{\alpha}(s,a)$ to approach a numerical value relatively slowly. Because of this, we are interested in improving the formula for $\zeta^{\alpha}(s,a)$ by computing the derivative of order $\alpha$ in a new way, which we accomplish as follows:
	\subsection{Simplified form of $\zeta^{\alpha}(s,a)$}\label{subsec3}
	\begin{thm}\label{thm3}
		Let $s\in\mathbb{C}$ with $Re(s)<0$ and $\alpha\in\mathbb{R}_{>0}\backslash\mathbb{N},$ then we have
		\begin{multline}\label{eqn38}
			\zeta^{\alpha}(s,a)=\dot{\iota}(2\pi)^{s-1}\sum_{h=0}^{\infty} \binom{\alpha}{h}\left[\frac{d^h}{ds^h}\left(\sum_{q=1}^{\infty} \frac{cos(2\pi qa)}{q^{1-s}}+\dot{\iota}\sum_{q=1}^{\infty}\frac{sin(2\pi qa)}{q^{1-s}}\right)\Gamma(1-s)\right]\\\times\left[e^{\frac{-\dot{\iota}\pi s}{2}}\bar{\tau}^{\alpha-h}-e^{\frac{\dot{\iota}\pi s}{2}}\tau^{\alpha-h}\right],
		\end{multline}
		where $\tau=log2\pi+\frac{\dot{\iota}\pi}{2}.$
		\begin{proof}
			The intuation for this proof comes from rewriting the terms  $sin\frac{\pi s}{2}$ and $cos\frac{\pi s}{2}$ in their Euler forms in the Equation $\ref{eqn5}$. So from Equation $\ref{eqn5}$, for $s\in\mathbb{C}$ with $Re(s)<0$, we have
			$$\zeta(s,a)=2(2\pi)^{s-1}\Gamma(1-s)\left(sin\left(\frac{\pi s}{2}\right)\sum_{q=1}^{\infty} \frac{cos(2\pi qa)}{q^{1-s}} + cos\left(\frac{\pi s}{2}\right)\sum_{q=1}^{\infty} \frac{sin(2\pi qa)}{q^{1-s}}\right),$$
			Now write $$sin\left(\frac{\pi s}{2}\right)=\frac{e^\frac{\dot{\iota}\pi s}{2}-e^\frac{-\dot{\iota}\pi s}{2}}{2\dot{\iota}}$$ and $$cos\left(\frac{\pi s}{2}\right)=\frac{e^\frac{\dot{\iota}\pi s}{2}+e^\frac{-\dot{\iota}\pi s}{2}}{2}$$ and by putting these in the above Equation, we get
			\begin{multline}\label{eqn39}
				\zeta(s,a)=2e^{(s-1)log(2\pi)}\Gamma(1-s)\\\times\left(\frac{e^\frac{\dot{\iota}\pi s}{2}-e^\frac{-\dot{\iota}\pi s}{2}}{2\dot{\iota}}\sum_{q=1}^{\infty} \frac{cos(2\pi qa)}{q^{1-s}} + \frac{e^\frac{\dot{\iota}\pi s}{2}+e^\frac{-\dot{\iota}\pi s}{2}}{2}\sum_{q=1}^{\infty} \frac{sin(2\pi qa)}{q^{1-s}}\right),
			\end{multline}
			This further simplifies to
			\begin{multline}\label{eqn40}
				\zeta(s,a)=\left[e^{(s-1)(log2\pi+{\frac{\dot{\iota}\pi}{2})}}+e^{(s-1)(log2\pi-\frac{\dot{\iota}\pi}{2})}\right]\Gamma(1-s)\\\times\left(\sum_{q=1}^{\infty} \frac{cos(2\pi qa)}{q^{1-s}}+\dot{\iota}\sum_{q=1}^{\infty}\frac{sin(2\pi qa)}{q^{1-s}}\right)
			\end{multline}
			Now set $$\tau=log2\pi+\frac{\dot{\iota}\pi}{2} , g(s)=\left(\sum_{q=1}^{\infty} \frac{cos(2\pi qa)}{q^{1-s}}+\dot{\iota}\sum_{q=1}^{\infty}\frac{sin(2\pi qa)}{q^{1-s}}\right)$$
			and $$\psi(s,\tau)=e^{(s-1)\tau}\Gamma(1-s)g(s)$$
			This implies
			
			$$\zeta(s,a)=\psi(s,\tau)+\psi(s,\bar{\tau})$$
			Now use the Grünwald-Letnikov derivative to $\zeta(s,a)$ and get
			$$\zeta^{\alpha}(s,a)=\psi^{\alpha}(s,\tau)+\psi^{\alpha}(s,\bar{\tau})$$
			Next, we calculate each Fractional derivative of $\psi$ separately, and finally we add them all up.
			\begin{align*}
				\psi^{\alpha}(s,\tau)
				&=D^{\alpha}_f\left(e^{(s-1)\tau}\Gamma(1-s)g(s)\right)\\
				&=\sum_{h=0}^{\infty} \binom{\alpha}{h}\left[\frac{d^h}{ds^h}g(s)\Gamma(1-s)\right]D^{\alpha-h}_f e^{(s-1)\tau}
			\end{align*}
			where $D^{\alpha-h}_f e^{(s-1)\tau}=e^{(s-1)\tau}\tau^{\alpha-h}$ (by Equation $\ref{eqn7}$).
			
			Consequently we arrive at
			\begin{align*}
				\psi^{\alpha}(s,\tau)
				&=\sum_{h=0}^{\infty} \binom{\alpha}{h}\left[\frac{d^h}{ds^h}g(s)\Gamma(1-s)\right]D^{\alpha-h}_f e^{(s-1)\tau}\\
				&=\sum_{h=0}^{\infty} \binom{\alpha}{h}\left[\frac{d^h}{ds^h}g(s)\Gamma(1-s)\right]e^{(s-1)\tau}\tau^{\alpha-h}
			\end{align*}
			Similarly we get
			$$\psi^{\alpha}(s,\bar{\tau})=\sum_{h=0}^{\infty} \binom{\alpha}{h}\left[\frac{d^h}{ds^h}g(s)\Gamma(1-s)\right]e^{(s-1)\bar{\tau}}\bar{\tau}^{\alpha-h}$$
			Now by combining the two terms $\psi^{\alpha}(s,\tau)$ and $\psi^{\alpha}(s,\bar{\tau})$ and realizing that
			$$e^{-\tau}=-e^{\frac{\dot{\iota}\pi}{2}}\frac{1}{2\pi}$$ and $$e^{-\bar{\tau}}=e^{\frac{\dot{\iota}\pi}{2}}\frac{1}{2\pi}$$
			we finally get
			\begin{align*}
				\zeta^{\alpha}(s,a)
				&=\sum_{h=0}^{\infty} \binom{\alpha}{h}\left[\frac{d^h}{ds^h}g(s)\Gamma(1-s)\right]\left[e^{(s-1)\bar{\tau}}\bar{\tau}^{\alpha-h}+e^{(s-1)\tau}\tau^{\alpha-h}\right]\\
				&=\sum_{h=0}^{\infty} \binom{\alpha}{h}\left[\frac{d^h}{ds^h}g(s)\Gamma(1-s)\right]\left[e^{\frac{\dot{\iota}\pi}{2}}\frac{1}{2\pi}e^{s\bar{\tau}}\bar{\tau}^{\alpha-h}-e^{\frac{\dot{\iota}\pi}{2}}\frac{1}{2\pi}e^{s\tau}\tau^{\alpha-h}\right]\\
				&=\frac{\dot{\iota}}{2\pi}\sum_{h=0}^{\infty} \binom{\alpha}{h}\left[\frac{d^h}{ds^h}g(s)\Gamma(1-s)\right]\left[(2\pi)^s e^{\frac{-\dot{\iota}\pi s}{2}}\bar{\tau}^{\alpha-h}-(2\pi)^s e^{\frac{\dot{\iota}\pi s}{2}}\tau^{\alpha-h}\right]\\
				&=\dot{\iota}(2\pi)^{s-1}\sum_{h=0}^{\infty}\binom{\alpha}{h}\left[\frac{d^h}{ds^h}g(s)\Gamma(1-s)\right]\left[e^{\frac{-\dot{\iota}\pi s}{2}}\bar{\tau}^{\alpha-h}-e^{\frac{\dot{\iota}\pi s}{2}}\tau^{\alpha-h}\right]
			\end{align*}
			On putting the value of $g(s)$ above, we get Theorem $\ref{thm3}$.
		\end{proof}
	\end{thm}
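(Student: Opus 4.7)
The plan is to start from the functional equation (\ref{eqn5}) for $\zeta(s,a)$ and rewrite the sine and cosine of $\pi s/2$ using Euler's identity. Once the trigonometric factors are replaced by complex exponentials, the right-hand side will split into two terms, each of the form $e^{(s-1)\tau}\Gamma(1-s)g(s)$, where $\tau=\log 2\pi+\dot{\iota}\pi/2$, its conjugate $\bar\tau=\log 2\pi-\dot{\iota}\pi/2$, and $g(s)=\sum_{q\geq1}\cos(2\pi qa)q^{s-1}+\dot{\iota}\sum_{q\geq1}\sin(2\pi qa)q^{s-1}$. The motivation is that exponentials $e^{(s-1)\tau}$ are eigenfunctions of the Grünwald--Letnikov operator, so their fractional derivative is trivial and the triple sum of Theorem \ref{thm2} collapses to a single sum.

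First I would carry out the purely algebraic rewriting: substitute the Euler forms of $\sin(\pi s/2)$ and $\cos(\pi s/2)$ into (\ref{eqn5}), absorb the factor $2(2\pi)^{s-1}=2e^{(s-1)\log 2\pi}$ and the $1/(2\dot{\iota})$ and $1/2$ constants, and verify that the result factors as
\begin{equation*}
\zeta(s,a)=\psi(s,\tau)+\psi(s,\bar\tau), \qquad \psi(s,\tau):=e^{(s-1)\tau}\Gamma(1-s)g(s).
\end{equation*}
This is where the specific definition $\tau=\log 2\pi+\dot{\iota}\pi/2$ comes from, and it is essentially the calculation already displayed around (\ref{eqn40}).

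Next I would apply the Grünwald--Letnikov operator $D^\alpha_f$, which is linear, to obtain $\zeta^\alpha(s,a)=\psi^\alpha(s,\tau)+\psi^\alpha(s,\bar\tau)$. To each $\psi^\alpha(s,\tau)$ I apply the generalized Leibniz rule (\ref{eqn8}), grouping $\Gamma(1-s)g(s)$ as one factor and $e^{(s-1)\tau}$ as the other. The point is that by Corollary 4 of \cite{r7} the fractional derivative of an exponential is a scalar multiple of itself, so $D^{\alpha-h}_f e^{(s-1)\tau}=\tau^{\alpha-h}e^{(s-1)\tau}$, and likewise with $\bar\tau$. Summing the two Leibniz expansions produces
\begin{equation*}
\zeta^\alpha(s,a)=\sum_{h=0}^{\infty}\binom{\alpha}{h}\left[\frac{d^h}{ds^h}\bigl(g(s)\Gamma(1-s)\bigr)\right]\bigl[e^{(s-1)\tau}\tau^{\alpha-h}+e^{(s-1)\bar\tau}\bar\tau^{\alpha-h}\bigr].
\end{equation*}

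Finally I would simplify the bracket using $e^{-\tau}=-e^{\dot{\iota}\pi/2}/(2\pi)$ and $e^{-\bar\tau}=e^{\dot{\iota}\pi/2}/(2\pi)$, factor out $(2\pi)^{s-1}$ and the common $e^{\dot{\iota}\pi/2}=\dot{\iota}$, and arrive at the factor $\dot{\iota}(2\pi)^{s-1}\bigl[e^{-\dot{\iota}\pi s/2}\bar\tau^{\alpha-h}-e^{\dot{\iota}\pi s/2}\tau^{\alpha-h}\bigr]$ exactly as in (\ref{eqn38}). The main obstacle is not conceptual but bookkeeping: keeping careful track of the signs produced by $1/(2\dot{\iota})$, of the conjugation $\tau\leftrightarrow\bar\tau$, and of the factor $e^{(s-1)\tau}=(2\pi)^{s-1}e^{-\dot{\iota}\pi/2}e^{\dot{\iota}\pi s/2}$ when extracting the $(2\pi)^{s-1}$ prefactor, so that the final sign in front of $e^{\dot{\iota}\pi s/2}\tau^{\alpha-h}$ comes out negative. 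Validity on $\{\RE(s)<0\}$ follows from that of (\ref{eqn5}) together with the uniform convergence of the fractional incremental ratio on which Lemma \ref{lem2} and the Leibniz rule (\ref{eqn8}) rest.
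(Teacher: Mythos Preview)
Your proposal is correct and follows essentially the same approach as the paper's own proof: rewrite the trigonometric factors in (\ref{eqn5}) via Euler's formula to obtain the decomposition $\zeta(s,a)=\psi(s,\tau)+\psi(s,\bar\tau)$, apply the Leibniz rule (\ref{eqn8}) with the grouping $[\Gamma(1-s)g(s)]\cdot e^{(s-1)\tau}$, use that exponentials are eigenfunctions of $D^{\alpha-h}_f$, and then simplify with the identities for $e^{-\tau}$ and $e^{-\bar\tau}$. The steps, the intermediate expressions, and the bookkeeping you flag match the paper's argument essentially line for line.
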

	Now by Theorem $\ref{thm3}$ we will prove Simplified form of Equation $\ref{eqn9}$ which is Simplified form of Functional Equation of $\zeta^{\alpha}(s)$.
	\begin{cor}
		Let $s\in\mathbb{C}$ with $Re(s)<0$ and $\alpha\in\mathbb{R}_{>0}\backslash\mathbb{N},$ then we have
		\begin{multline}\label{eqn41}
			\zeta^{\alpha}(s)=\dot{\iota}(2\pi)^{s-1}\sum_{h=0}^{\infty} \binom{\alpha}{h}\left[\frac{d^h}{ds^h}\zeta(1-s)\Gamma(1-s)\right]\left[e^{\frac{-\dot{\iota}\pi s}{2}}\bar{\tau}^{\alpha-h}-e^{\frac{\dot{\iota}\pi s}{2}}\tau^{\alpha-h}\right],
		\end{multline}
		where $\tau=log2\pi+\frac{\dot{\iota}\pi}{2}.$
		\begin{proof}
			Putting $a=1$ in Theorem $\ref{thm3}$, we get the desired result because we know that $\zeta^{\alpha}(s,1)=\zeta^{\alpha}(s)$.
		\end{proof}
	\end{cor}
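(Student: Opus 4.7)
The plan is very direct: the corollary is a specialization of Theorem \ref{thm3} at $a = 1$, so I would simply substitute and simplify the inner sum.

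First I would write down the statement of Theorem \ref{thm3} with $a$ replaced by $1$. Since for every positive integer $q$ we have $\cos(2\pi q) = 1$ and $\sin(2\pi q) = 0$, the factor
\[
g(s) \;=\; \sum_{q=1}^{\infty} \frac{\cos(2\pi q a)}{q^{1-s}} \;+\; \dot{\iota}\sum_{q=1}^{\infty}\frac{\sin(2\pi q a)}{q^{1-s}}
\]
collapses at $a = 1$ to $\sum_{q=1}^{\infty} q^{-(1-s)}$. Because the corollary is stated for $\mathrm{Re}(s) < 0$, we have $\mathrm{Re}(1-s) > 1$, so this Dirichlet series converges absolutely and equals $\zeta(1-s)$. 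Consequently $g(s) = \zeta(1-s)$ in the relevant half-plane, and all its $h$-th $s$-derivatives may be computed termwise, so $\frac{d^h}{ds^h}[g(s)\Gamma(1-s)]$ becomes $\frac{d^h}{ds^h}[\zeta(1-s)\Gamma(1-s)]$.

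Second, I would identify the left-hand side: by the definition of the Hurwitz zeta function together with Equation \eqref{eqn3}, $\zeta(s,1) = \zeta(s)$, and since the Grünwald--Letnikov fractional derivative in Equation \eqref{eqn7} depends only on the values of the function, applying $D_f^{\alpha}$ to both sides gives $\zeta^{\alpha}(s,1) = \zeta^{\alpha}(s)$. Plugging this into the specialized formula yields exactly Equation \eqref{eqn41}, with the same $\tau = \log 2\pi + \dot{\iota}\pi/2$.

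There is no real obstacle in this proof since it is a clean substitution; the only thing to double-check is that $a = 1$ genuinely lies in the admissible range $0 < a \le 1$ of Theorem \ref{thm2} and Theorem \ref{thm3}, which it does (the endpoint is included), and that the termwise differentiation of the Dirichlet series $\sum q^{-(1-s)}$ is legal in the half-plane $\mathrm{Re}(1-s) > 1$, which follows from absolute and uniform convergence on compact subsets. Everything else is algebraic bookkeeping that matches the corollary verbatim.
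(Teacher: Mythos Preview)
Your proposal is correct and follows exactly the paper's approach: set $a=1$ in Theorem~\ref{thm3} and use $\zeta^{\alpha}(s,1)=\zeta^{\alpha}(s)$. You have simply spelled out the simplification $g(s)\to\zeta(1-s)$ and the minor convergence checks that the paper leaves implicit.
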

	In addition, Equation $\ref{eqn23}$ can be  rewritten as an algebraic combinations of trigonometric functions specifically the sine and cosine functions that lowers the expensive computational cost as follows:
	\begin{thm}\label{thm4}
		Let $s\in\mathbb{C}$ with $Re(s)<0$ and $\alpha\in\mathbb{R}_{>0}\backslash\mathbb{N},$ then we have
		\begin{equation}\label{eqn42}
			\zeta^{\alpha}(s,a)=2(2\pi)^{s-1}e^{\dot{\iota}\pi\alpha}\sum_{r=0}^{\infty} \sum_{l=0}^{\infty} \left(a_{r\alpha l}sin\left(\frac{\pi s}{2}\right)+b_{r\alpha l}cos\left(\frac{\pi s}{2}\right)\right)\Gamma^{(r)}(1-s),
		\end{equation}
		where the coefficients  $a_{r\alpha l}$ and $b_{r\alpha l}$ are given by
		$$a_{r\alpha l}=\sum_{k=0}^{\infty}\frac{A^{\alpha}_{r,k,l}}{log^{r+k+l-\alpha}(2\pi )}(-\frac{\pi}{2})^{k}\left[cos\left(\frac{\pi k}{2}\right)f(s)-sin\left(\frac{\pi k}{2}\right)h(s)\right],$$
		$$b_{r\alpha l}=\sum_{k=0}^{\infty}\frac{A^{\alpha}_{r,k,l}}{log^{r+k+l-\alpha}(2\pi )}(-\frac{\pi}{2})^{k}\left[sin\left(\frac{\pi k}{2}\right)f(s)+cos\left(\frac{\pi k}{2}\right)h(s)\right],$$
		and the functions $f(s)$ and $h(s)$ are given by
		$$f(s)=\sum_{q=1}^{\infty} \frac{(-logq)^{l}cos(2\pi qa)}{q^{1-s}},$$
		$$h(s)=\sum_{q=1}^{\infty} \frac{(-logq)^{l}sin(2\pi qa)}{q^{1-s}}.$$
		\begin{proof}
			The trigonometric identity
			$$sin\frac{\pi (s+k)}{2}=sin\frac{\pi s}{2}cos\frac{\pi k}{2}+cos\frac{\pi s}{2}sin\frac{\pi k}{2},$$
			$$cos\frac{\pi (s+k)}{2}=cos\frac{\pi s}{2}cos\frac{\pi k}{2}-sin\frac{\pi s}{2}sin\frac{\pi k}{2}$$
			when substituted in Equation $\ref{eqn23}$ gives Theorem $\ref{thm4}$.
		\end{proof}
	\end{thm}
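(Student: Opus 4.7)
The plan is to start from the identity of Theorem~\ref{thm2} and apply the angle-addition identities
$$\sin\frac{\pi(s+k)}{2} = \sin\frac{\pi s}{2}\cos\frac{\pi k}{2} + \cos\frac{\pi s}{2}\sin\frac{\pi k}{2},$$
$$\cos\frac{\pi(s+k)}{2} = \cos\frac{\pi s}{2}\cos\frac{\pi k}{2} - \sin\frac{\pi s}{2}\sin\frac{\pi k}{2}$$
in order to peel the summation index $k$ out of the trigonometric arguments. After this substitution, the only $s$-dependent trigonometric factors that survive are $\sin(\pi s/2)$ and $\cos(\pi s/2)$, and the theorem reduces to grouping the rest of the summand correctly.

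Concretely, I would abbreviate $f(s)=\sum_{q=1}^{\infty}(-\log q)^{l}\cos(2\pi qa)/q^{1-s}$ and $h(s)=\sum_{q=1}^{\infty}(-\log q)^{l}\sin(2\pi qa)/q^{1-s}$ (both depending on the running index $l$ as well as on $s$), so that the bracket appearing in equation~\ref{eqn23} reads $\sin(\pi(s+k)/2)\,f(s) + \cos(\pi(s+k)/2)\,h(s)$. Substituting the two angle-sum identities and regrouping by the $s$-dependent trigonometric factor yields
$$\sin\frac{\pi s}{2}\left[\cos\frac{\pi k}{2}f(s) - \sin\frac{\pi k}{2}h(s)\right] + \cos\frac{\pi s}{2}\left[\sin\frac{\pi k}{2}f(s) + \cos\frac{\pi k}{2}h(s)\right].$$
Multiplying by the accompanying factor $A^{\alpha}_{r,k,l}(-\pi/2)^{k}/\log^{r+k+l-\alpha}(2\pi)$ from~\ref{eqn23} and carrying out the $k$-summation produces, for the $\sin(\pi s/2)$-part, exactly the coefficient $a_{r\alpha l}$ claimed in the theorem, and for the $\cos(\pi s/2)$-part, exactly $b_{r\alpha l}$.

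Finally, because $\sin(\pi s/2)$ and $\cos(\pi s/2)$ are independent of the outer indices $r$ and $l$, they factor out of the $\sum_{r,l}$ summation and multiply the prefactor $2(2\pi)^{s-1}e^{\dot{\iota}\pi\alpha}\Gamma^{(r)}(1-s)$ unchanged, producing~\ref{eqn42}. The argument is entirely algebraic, so the main obstacle is bookkeeping rather than analysis; the one technical point is to justify interchanging the $k$-summation with the outer $r,l$-sums when interpreting $a_{r\alpha l}$ and $b_{r\alpha l}$ as inner series, and this is supplied by the absolute convergence of the triple sum in~\ref{eqn23} for $Re(s)<0$ that is already established in Theorem~\ref{thm2}.
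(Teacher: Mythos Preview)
Your proposal is correct and follows exactly the same approach as the paper: substitute the angle-addition identities for $\sin\tfrac{\pi(s+k)}{2}$ and $\cos\tfrac{\pi(s+k)}{2}$ into equation~\ref{eqn23} and regroup by $\sin(\pi s/2)$ and $\cos(\pi s/2)$ to read off $a_{r\alpha l}$ and $b_{r\alpha l}$. In fact you supply more detail than the paper does, including the remark on interchanging the $k$-sum with the outer $r,l$-sums via absolute convergence.
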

	Now by Theorem $\ref{thm4}$, Equation $\ref{eqn9}$ which is Functional Equation of $\zeta^{\alpha}(s)$ can also be rewritten in terms of sines and cosines as follows:
	\begin{cor}
		Let $s\in\mathbb{C}$ with $Re(s)<0$ and $\alpha\in\mathbb{R}_{>0}\backslash\mathbb{N},$ then we have
		\begin{multline}\label{eqn43}
			\zeta^{\alpha}(s)=2(2\pi)^{s-1}e^{\dot{\iota}\pi\alpha}\sum_{r=0}^{\infty} \sum_{l=0}^{\infty} \left(a_{r\alpha l}sin\left(\frac{\pi s}{2}\right)+b_{r\alpha l}cos\left(\frac{\pi s}{2}\right)\right)\\\times\left[\zeta^{(l)}(1-s)\Gamma^{(r)}(1-s)\right],
		\end{multline}
		where the coefficients  $a_{r\alpha l}$ and $b_{r\alpha l}$ are given by
		$$a_{r\alpha l}=\sum_{k=0}^{\infty}\frac{A^{\alpha}_{r,k,l}}{log^{r+k+l-\alpha}(2\pi )}(-\frac{\pi}{2})^{k}cos\left(\frac{\pi k}{2}\right),$$
		$$b_{r\alpha l}=\sum_{k=0}^{\infty}\frac{A^{\alpha}_{r,k,l}}{log^{r+k+l-\alpha}(2\pi )}(-\frac{\pi}{2})^{r}sin\left(\frac{\pi k}{2}\right).$$
		\begin{proof}
			Putting $a=1$ in Theorem $\ref{thm4}$, we get the desired result as we know that $\zeta^{\alpha}(s,1)=\zeta^{\alpha}(s)$.
		\end{proof}
	\end{cor}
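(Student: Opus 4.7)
The plan is to derive the corollary as a direct specialization of Theorem \ref{thm4} at $a=1$. The starting point is the identity $\zeta(s,1)=\zeta(s)$, and since the fractional derivative $D_f^\alpha$ acts only with respect to $s$, this lifts to the operator level as $\zeta^{\alpha}(s,1)=\zeta^{\alpha}(s)$. Thus it suffices to substitute $a=1$ into the right-hand side of Equation $\ref{eqn42}$ and simplify.

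First I would examine how the auxiliary functions $f(s)$ and $h(s)$ from Theorem \ref{thm4} behave at $a=1$. For every positive integer $q$ we have $\cos(2\pi q)=1$ and $\sin(2\pi q)=0$, so immediately
\[
h(s)=\sum_{q=1}^{\infty}\frac{(-\log q)^{l}\sin(2\pi q)}{q^{1-s}}=0,
\qquad
f(s)=\sum_{q=1}^{\infty}\frac{(-\log q)^{l}}{q^{1-s}}.
\]
The next step is to identify $f(s)$ with a derivative of the Riemann zeta function. Since $\zeta(u)=\sum_{q\geq 1}q^{-u}$, term-by-term differentiation (justified by absolute and locally uniform convergence in the relevant half-plane) gives $\zeta^{(l)}(u)=\sum_{q\geq 1}(-\log q)^{l}q^{-u}$, and evaluating at $u=1-s$ yields $f(s)=\zeta^{(l)}(1-s)$.

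With $h(s)=0$ and $f(s)=\zeta^{(l)}(1-s)$, the expressions for $a_{r\alpha l}$ and $b_{r\alpha l}$ in Theorem \ref{thm4} collapse: the $\sin(\pi k/2)h(s)$ contribution in $a_{r\alpha l}$ and the $\cos(\pi k/2)h(s)$ contribution in $b_{r\alpha l}$ vanish, while the remaining factor $f(s)=\zeta^{(l)}(1-s)$ is independent of the summation index $k$ and can be pulled out of the inner sum. Factoring $\zeta^{(l)}(1-s)$ into the product $\Gamma^{(r)}(1-s)$ already present in $(\ref{eqn42})$, one obtains precisely the bracketed factor $\zeta^{(l)}(1-s)\Gamma^{(r)}(1-s)$ in $(\ref{eqn43})$, with the reduced coefficients
\[
a_{r\alpha l}=\sum_{k=0}^{\infty}\frac{A^{\alpha}_{r,k,l}}{\log^{r+k+l-\alpha}(2\pi)}\left(-\frac{\pi}{2}\right)^{k}\cos\left(\frac{\pi k}{2}\right),
\qquad
b_{r\alpha l}=\sum_{k=0}^{\infty}\frac{A^{\alpha}_{r,k,l}}{\log^{r+k+l-\alpha}(2\pi)}\left(-\frac{\pi}{2}\right)^{k}\sin\left(\frac{\pi k}{2}\right).
\]

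There is no real obstacle here: the argument is a routine specialization combined with the observation that termwise differentiation of the Dirichlet series for $\zeta(u)$ produces the $(-\log q)^{l}$ factor appearing in $f(s)$. The only point requiring a little care is the interchange of the fractional derivative with the evaluation $a=1$, which is legitimate because $D_f^{\alpha}$ acts only on the variable $s$ and does not interact with the parameter $a$, so the specialization commutes with the operator and the identity inherited from Theorem \ref{thm4} passes directly to the Riemann case.
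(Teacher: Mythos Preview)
Your proof is correct and follows exactly the paper's approach: specialize Theorem~\ref{thm4} at $a=1$, using $\zeta^{\alpha}(s,1)=\zeta^{\alpha}(s)$. You have simply made explicit the computations $h(s)=0$ and $f(s)=\zeta^{(l)}(1-s)$ that the paper leaves implicit, and in doing so you have also corrected what appears to be a typo in the stated $b_{r\alpha l}$ (the exponent should indeed be $k$, not $r$).
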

	\subsection{Functional Equation for $\theta^{\alpha}(s)$}\label{subsec4}
	\begin{thm}\label{thm5}
		Let $s\in\mathbb{C}$ with $Re(s)<0$ and $\alpha\in\mathbb{R}_{>0}\backslash\mathbb{N},$ then we have
		\begin{equation}\label{eqn44}
			\theta^{\alpha}(s)=\sum_{n\in\mathbb{Z}}e^{\dot{\iota}\pi n}(\pi n^{2})^{\alpha}e^{-\pi n^{2}s}
		\end{equation}
		\begin{proof}
			By letting $f(s)=\theta(s)$ in Equation $\ref{eqn7}$, we get
			\begin{align*}
				\theta^{\alpha}(s)
				&=\lim_{l \to 0^{+}} \frac{\sum_{m=0}^{\infty}\binom{\alpha}{m}(-1)^m \theta(s-ml)}{l^{\alpha}}\\
				&=\lim_{l \to 0^{+}} \frac{\sum_{m=0}^{\infty}\binom{\alpha}{m}(-1)^m \sum_{n\in\mathbb{Z}}e^{-\pi n^2(s-ml)}}{l^{\alpha}}\\
				&=\lim_{l \to 0^{+}} \frac{\sum_{n\in\mathbb{Z}}e^{-\pi n^2s}\sum_{m=0}^{\infty}\binom{\alpha}{m}(-1)^m e^{\pi n^2ml}}{l^{\alpha}}\\
				&=\sum_{n\in\mathbb{Z}}e^{\dot{\iota}\pi n}e^{-\pi n^2s}\lim_{l \to 0^{+}}\left(\frac{e^{\pi n^{2}l}-1}{l}\right)^{\alpha}\\
				&=\sum_{n\in\mathbb{Z}}e^{\dot{\iota}\pi n}(\pi n^{2})^{\alpha}e^{-\pi n^{2}s}
			\end{align*}	
		\end{proof}
	\end{thm}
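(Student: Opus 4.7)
My plan is to apply the Grünwald--Letnikov definition from Equation \ref{eqn7} directly to $\theta(s)$ in its series form \ref{11}. The advantage of working term-by-term with the series is that each summand $e^{-\pi n^{2} s}$ is an elementary exponential for which the Grünwald--Letnikov limit can be evaluated in closed form. First I would substitute $\theta(s-ml)=\sum_{n\in\mathbb{Z}} e^{-\pi n^{2} s}\, e^{\pi n^{2} m l}$ into the numerator of \ref{eqn7}, then interchange the order of the sums over the shift index $m$ and the lattice index $n$, pulling the factor $e^{-\pi n^{2} s}$ outside the inner sum. The $m$-sum that remains is the formal binomial series $\sum_{m=0}^{\infty} \binom{\alpha}{m}(-1)^{m}\bigl(e^{\pi n^{2} l}\bigr)^{m}$, which the generalized binomial theorem identifies as $(1-e^{\pi n^{2} l})^{\alpha}$.

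Next I would pass to the limit $l\to 0^{+}$ in the quotient $(1-e^{\pi n^{2} l})^{\alpha}/l^{\alpha}$. A Taylor expansion gives $1-e^{\pi n^{2} l}=-\pi n^{2} l+O(l^{2})$, so the quotient tends to $(-\pi n^{2})^{\alpha}$, which I would reinterpret through the principal branch as a complex exponential with argument $\pi$ multiplied by $(\pi n^{2})^{\alpha}$. The $n=0$ term vanishes identically for $\alpha>0$, so no extra care is needed there, and the remaining terms reassemble into the sum $\sum_{n\in\mathbb{Z}} e^{\dot{\iota}\pi n}(\pi n^{2})^{\alpha} e^{-\pi n^{2} s}$ claimed in the statement, up to the precise sign convention inherited from the Grünwald--Letnikov prescription.

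The main obstacle will be justifying the interchange of the limit $l\to 0^{+}$ with the infinite sum over $n\in\mathbb{Z}$. The individual quotients grow like $n^{2\alpha}$ in magnitude, while only the Gaussian factor $e^{-\pi n^{2}\RE(s)}$ provides decay; this will constrain which half-plane of $s$ is permissible and will require a dominated-convergence-type bound uniform in $l$ on a punctured neighbourhood of $0$. A secondary subtlety is that for $n\ne 0$ and $l>0$ one has $|e^{\pi n^{2} l}|>1$, so the binomial identity used above lies outside the ordinary disk of convergence and must be read as a formal/asymptotic identity consistent with the Grünwald--Letnikov definition rather than as a convergent power series. Once these two analytic points are settled, assembling the pieces into the stated formula is routine.
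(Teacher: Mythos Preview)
Your approach is essentially identical to the paper's: both apply the Gr\"unwald--Letnikov definition \eqref{eqn7} termwise to the theta series, interchange the $m$- and $n$-sums, collapse the inner $m$-sum via the generalized binomial identity to $(1-e^{\pi n^{2}l})^{\alpha}$, and then evaluate the limit $l\to 0^{+}$ of the resulting difference quotient. The analytic caveats you raise (dominated convergence for the swap, the binomial series lying outside its disk of convergence) are not addressed in the paper's derivation, which proceeds purely formally; the ``sign convention'' discrepancy you flag is precisely the step where the paper passes from the collapsed $m$-sum to the factor $e^{\dot\iota\pi n}$ without further comment.
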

	\begin{thm}\label{thm6}
		Let $s\in\mathbb{C}$ with $Re(s)<0$ and $\alpha\in\mathbb{R}_{>0}\backslash\mathbb{N},$ then we have
		\begin{equation}\label{eqn45}
			\theta^{\alpha}(s)=\sum_{k=0}^{\infty}\binom{\alpha}{k}\frac{e^{\dot{\iota}\pi\alpha}(2k+1)!!}{2^{k}(2k+1)s^{\frac{2\alpha+3}{2}}}\theta^{(\alpha-k)}\left(\frac{1}{s}\right),
		\end{equation}
		where $(2k+1)!!=(2k+1)(2k-1)(2k-3)......(1)$
		\begin{proof}
			From Equation $\ref{eqn12}$, we know that
			$$\theta(s)=\frac{1}{\sqrt{s}}\theta\left(\frac{1}{s}\right), Re(s)>0$$
			Now take the $\alpha$-order Fractional derivative of $\theta(s)$ and use Generalized Leibniz rule i.e. Equation $\ref{eqn8}$ to the above Equation's right hand side. This implies
			\begin{equation}\label{eqn46}
				\theta^{\alpha}(s)=\sum_{k=0}^{\infty}\binom{\alpha}{k}\left(\frac{1}{\sqrt{s}}\right)^{(k)}\theta^{(\alpha-k)}\left(\frac{1}{s}\right)
			\end{equation}
			Now we know that
			\begin{equation}\label{eqn47}
				\left(\frac{1}{\sqrt{s}}\right)^{(k)}=\frac{e^{\dot{\iota}\pi k}(2k+1)!!}{2^{k}(2k+1)s^{\frac{2k+1}{2}}},
			\end{equation}
			and
			\begin{equation}\label{eqn48}
				\theta\left(\frac{1}{s}\right)^{(\alpha-k)}=\frac{e^{\dot{\iota}\pi (\alpha-k)}}{s^{\alpha-k+1}}\theta^{(\alpha-k)}\left(\frac{1}{s}\right)
			\end{equation}
			By putting Equation $\ref{eqn47}$ and $\ref{eqn48}$ in Equation $\ref{eqn46}$, we get Theorem $\ref{thm6}.$
		\end{proof}
	\end{thm}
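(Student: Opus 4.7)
The plan is to start from the classical functional equation $\theta(s)=\frac{1}{\sqrt{s}}\theta\!\left(\frac{1}{s}\right)$ (Equation~\ref{eqn12}) and apply the $\alpha$-order Grünwald--Letnikov fractional derivative to both sides. Since the right-hand side is a product of two functions of $s$, the generalized Leibniz rule (Equation~\ref{eqn8}) applies and yields
\begin{equation*}
\theta^{\alpha}(s)=\sum_{k=0}^{\infty}\binom{\alpha}{k}\left(\frac{1}{\sqrt{s}}\right)^{(k)}\theta\!\left(\frac{1}{s}\right)^{(\alpha-k)},
\end{equation*}
which is exactly Equation~\ref{eqn46}. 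Splitting the orders as $k$ on the algebraic prefactor and $\alpha-k$ on $\theta(1/s)$ is deliberate: all the non-integer differentiation is absorbed by the theta-composition, where the structural identity Equation~\ref{eqn48} is available, while the half-integer power is handled by an ordinary integer derivative.

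Next I would compute each factor in closed form. An elementary induction on $k$ (equivalently, the generalized binomial expansion applied with exponent $-1/2$) produces the formula for $(s^{-1/2})^{(k)}$ recorded in Equation~\ref{eqn47}. For $\theta(1/s)^{(\alpha-k)}$, I would appeal to Equation~\ref{eqn48}, which states that the $(\alpha-k)$-th fractional derivative with respect to $s$ injects a factor $e^{\dot{\iota}\pi(\alpha-k)}/s^{\alpha-k+1}$ and shifts the evaluation onto $\theta^{(\alpha-k)}(1/s)$. This is the natural fractional analogue of the integer-order observation that differentiating $\theta(1/s)$ once in $s$ produces a factor $-1/s^{2}$, together with the $e^{\dot{\iota}\pi(\alpha-k)}$ phase convention for non-integer powers of $-1$ used throughout the paper.

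The final step is purely algebraic bookkeeping. Substituting Equations~\ref{eqn47} and~\ref{eqn48} into the Leibniz expansion, the phases combine as $e^{\dot{\iota}\pi k}\,e^{\dot{\iota}\pi(\alpha-k)}=e^{\dot{\iota}\pi\alpha}$, and the powers of $s$ in the denominator combine as $s^{(2k+1)/2}\cdot s^{\alpha-k+1}=s^{\alpha+3/2}=s^{(2\alpha+3)/2}$. Both expressions are independent of the summation index $k$, so they pull outside the sum and the identity in Equation~\ref{eqn45} falls out immediately, with the surviving $k$-dependent coefficient being exactly $\binom{\alpha}{k}(2k+1)!!/(2^{k}(2k+1))$.

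The main obstacle will be justifying Equation~\ref{eqn48}, i.e.\ commuting the Grünwald--Letnikov limit with the inversion $s\mapsto 1/s$. At integer order this is a direct Faà di Bruno computation, but promoting it cleanly to non-integer order $\alpha-k$ requires controlling the incremental ratios in Equation~\ref{eqn7} under the substitution on the relevant half-plane, so that the factor $-1/s^{2}$ from the chain rule lifts to $e^{\dot{\iota}\pi(\alpha-k)}/s^{\alpha-k+1}$ in the stated form. A secondary concern is the absolute convergence of the outer sum in $k$, which should follow from the rapid decay of $\binom{\alpha}{k}(2k+1)!!/(2^{k}(2k+1))$ combined with standard bounds on $\theta^{(\alpha-k)}(1/s)$ away from the origin; these estimates are implicit in the hypotheses inherited from Lemma~\ref{lem2}.
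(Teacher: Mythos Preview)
Your proposal follows exactly the same route as the paper: apply the generalized Leibniz rule to the functional equation~\ref{eqn12}, invoke the closed forms in Equations~\ref{eqn47} and~\ref{eqn48} for the two factors, and combine the phases and powers of $s$ to obtain Equation~\ref{eqn45}. The only difference is that you are explicit about the potential difficulty in justifying Equation~\ref{eqn48} (the fractional chain rule under $s\mapsto 1/s$) and about convergence, whereas the paper simply asserts both without further comment.
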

	\begin{rem}
		Now, let's focus only on the consistency of Equation $\ref{eqn45}$. Note that by Equations $\ref{eqn21}$ and $\ref{eqn22},$ 
		\begin{equation}\label{eqn49}
			g^{(\alpha)}(s)\rightarrow g^{(0)}(s)=g(s).
		\end{equation}
		as $\alpha\rightarrow0^{+}$.
		
		Specifically, all of Equation $\ref{eqn21}$ and $\ref{eqn22}$'s hypotheses are satisfied by the theta function, and thus 
		\begin{equation}\label{eqn50}
			\theta^{(\alpha)}(s)\rightarrow \theta(s).
		\end{equation}
		as $\alpha\rightarrow0^{+}$.
		
		Theorem $\ref{thm6}$'s proof can, of course, be read backwards until Equation $\ref{eqn46}$. As a result, Equation $\ref{eqn46}$'s right-hand side converges to that of Equation $\ref{eqn12}$ as $\alpha\rightarrow0^{+}$.
		Therefore, Equation $\ref{eqn45}$ agrees with the definition and formulation of theta function.
	\end{rem}
	\subsection{Relation between $\zeta^{\alpha}(s)$ and $\theta(t)$}\label{subsec5}
	\begin{thm}\label{thm7}
		Let $s\in\mathbb{C}$ with $Re(s)<0$ and $\alpha\in\mathbb{R}_{>0}\backslash\mathbb{N},$ then we have
		\begin{equation}\label{eqn51}
			\zeta^{\alpha}(s)=\sum_{k=0}^{\infty}\sum_{j=0}^{\infty}\frac{e^{\dot{\iota}\pi(k-j)}\alpha!log^{j}(\pi)(\pi)^{\frac{s}{2}}}{(\alpha-k)!j!2^{\alpha+1}\Gamma^{k-j+1}(\frac{s}{2})}\int_{0}^{\infty}(\theta(t)-1)t^{\frac{s}{2}}log^{\alpha-k}(t)\frac{dt}{t}
		\end{equation}	
		\begin{proof}
			From \cite{r12} we have
			\begin{equation}\label{eqn52}
				\zeta(s)=\frac{\pi^{\frac{s}{2}}}{2\Gamma(\frac{s}{2})}\int_{0}^{\infty}(\theta(t)-1)t^{\frac{s}{2}}\frac{dt}{t},s\in\mathbb{C}.
			\end{equation}
			Now take the $\alpha$-order Fractional derivative of $\zeta(s)$ and use Generalized Leibniz rule i.e. Equation $\ref{eqn8}$ to Right hand side of the above Equation.This implies
			\begin{equation}\label{eqn53}
				\zeta^{\alpha}(s)=\frac{1}{2}\sum_{k=0}^{\infty}\binom{\alpha}{k}\left({\frac{\pi^{\frac{s}{2}}}{\Gamma(\frac{s}{2})}}\right)^{(k)}\left(\int_{0}^{\infty}(\theta(t)-1)t^{\frac{s}{2}}\frac{dt}{t}\right)^{(\alpha-k)}
			\end{equation}
			Now we calculate $\left({\frac{\pi^{\frac{s}{2}}}{\Gamma(\frac{s}{2})}}\right)^{(k)}$ by applying Generalized Leibniz rule i.e. Equation $\ref{eqn8}$. This implies
			\begin{equation}\label{eqn54}
				\left({\frac{\pi^{\frac{s}{2}}}{\Gamma(\frac{s}{2})}}\right)^{(k)}=\sum_{j=0}^{\infty}\binom{k}{j}(\pi^{\frac{s}{2}})^{(j)}\left(\frac{1}{\Gamma(\frac{s}{2})}\right)^{(k-j)}=\sum_{j=0}^{\infty}\frac{e^{\dot{\iota}\pi(k-j)}k!log^{j}(\pi)(\pi)^{\frac{s}{2}}}{2^{k}j!\Gamma^{k-j+1}(\frac{s}{2})}
			\end{equation}
			Similarly
			\begin{equation}\label{eqn55}
				\left(\int_{0}^{\infty}(\theta(t)-1)t^{\frac{s}{2}}\frac{dt}{t}\right)^{(\alpha-k)}=\int_{0}^{\infty}(\theta(t)-1)t^{\frac{s}{2}}\frac{log^{\alpha-k}(t)}{2^{\alpha-k}}\frac{dt}{t}
			\end{equation}
			By putting Equations $\ref{eqn54}$ and $\ref{eqn55}$ in Equation $\ref{eqn53}$, we get Theorem $\ref{thm7}$.
		\end{proof}
	\end{thm}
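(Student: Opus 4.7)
The plan is to apply the fractional derivative operator $D_f^{\alpha}$ to Riemann's classical integral representation of $\zeta(s)$ (Equation~\ref{eqn52}) and unpack the result by invoking the generalized Leibniz rule (Equation~\ref{eqn8}) twice.

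First I would start from
$$\zeta(s) = \frac{\pi^{s/2}}{2\,\Gamma(s/2)}\int_{0}^{\infty}(\theta(t)-1)\, t^{s/2}\,\frac{dt}{t},$$
view the right-hand side as the product of two $s$-dependent factors, namely the prefactor $\pi^{s/2}/\Gamma(s/2)$ and the integral, and apply $D_f^{\alpha}$ to both sides. The scalar $\tfrac{1}{2}$ commutes past the operator, so Equation~\ref{eqn8} gives
$$\zeta^{\alpha}(s) = \frac{1}{2}\sum_{k=0}^{\infty}\binom{\alpha}{k}\left(\frac{\pi^{s/2}}{\Gamma(s/2)}\right)^{(k)}\!\left(\int_{0}^{\infty}(\theta(t)-1)\, t^{s/2}\,\frac{dt}{t}\right)^{(\alpha-k)},$$
which is precisely the intermediate Equation~\ref{eqn53}.

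Next I would evaluate each factor separately. Because $k$ is a nonnegative integer, ordinary Leibniz applies to the prefactor, and I would expand $\bigl(\pi^{s/2}/\Gamma(s/2)\bigr)^{(k)}$ as a sum over $j\le k$ using $(\pi^{s/2})^{(j)} = (\log\pi/2)^{j}\pi^{s/2}$ together with the compact form of the derivatives of $1/\Gamma(s/2)$ employed in Equation~\ref{eqn54}. For the fractional differentiation of the integral, I would differentiate under the integral sign: all $s$-dependence is carried by $t^{s/2}=e^{(s/2)\log t}$, and by exactly the Grünwald--Letnikov computation underlying Equation~\ref{eqn32} the $(\alpha-k)$-th fractional derivative in $s$ is $\tfrac{\log^{\alpha-k}(t)}{2^{\alpha-k}}\,t^{s/2}$. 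This produces Equation~\ref{eqn55}.

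Finally I would substitute Equations~\ref{eqn54} and~\ref{eqn55} into the Leibniz expansion, consolidate the powers of two via $\tfrac{1}{2}\cdot\tfrac{1}{2^{k}}\cdot\tfrac{1}{2^{\alpha-k}}=\tfrac{1}{2^{\alpha+1}}$, and use $\binom{\alpha}{k}\,k! = \alpha!/(\alpha-k)!$ to obtain the double sum stated in Theorem~\ref{thm7}. The main obstacle is the analytic justification rather than the algebra: one must verify that the Grünwald--Letnikov incremental ratio defining $D_f^{\alpha}$ may be interchanged with the improper integral over $(0,\infty)$, so that fractional differentiation under the integral sign is legitimate, and that the pair $\bigl(\pi^{s/2}/\Gamma(s/2),\int_{0}^{\infty}(\theta(t)-1)t^{s/2}\tfrac{dt}{t}\bigr)$ meets the holomorphy hypothesis required to invoke Equation~\ref{eqn8}. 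For $\operatorname{Re}(s)<0$ the integrand decays exponentially as $t\to\infty$ (because $\theta(t)-1$ does) and is controlled as $t\to 0^{+}$ by the functional Equation~\ref{eqn12}, so uniform convergence on a complex neighbourhood of $s$ should hold and both interchanges can be justified by standard dominated-convergence arguments; once those analytic points are in place, the rest of the derivation is a routine bookkeeping of factors.
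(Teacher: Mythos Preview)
Your proposal follows exactly the same route as the paper: start from the integral representation~\eqref{eqn52}, apply the generalized Leibniz rule~\eqref{eqn8} to the product $\bigl(\pi^{s/2}/\Gamma(s/2)\bigr)\cdot\int_{0}^{\infty}(\theta(t)-1)t^{s/2}\,dt/t$ to obtain~\eqref{eqn53}, then expand the prefactor via Leibniz again to get~\eqref{eqn54}, differentiate the integral in $s$ under the integral sign to get~\eqref{eqn55}, and combine. The only addition is your discussion of the analytic justification (interchanging the Grünwald--Letnikov limit with the improper integral and checking the holomorphy hypothesis of~\eqref{eqn8}), which the paper does not address at all; otherwise the arguments are identical step for step.
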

	\begin{rem}
		Now, let's focus only on the consistency of Equation $\ref{eqn51}$. Note that by Equations $\ref{eqn21}$ and $\ref{eqn22},$ 
		\begin{equation}\label{eqn56}
			g^{(\alpha)}(s)\rightarrow g^{(0)}(s)=g(s).
		\end{equation}
		as $\alpha\rightarrow0^{+}$.
		
		Specifically, all of Equation $\ref{eqn21}$ and $\ref{eqn22}$'s hypotheses are satisfied by the Hurwitz Zeta function, and thus 
		\begin{equation}\label{eqn57}
			\zeta^{(\alpha)}(s,a)\rightarrow \zeta(s,a).
		\end{equation}
		as $\alpha\rightarrow0^{+}$.
		
		Theorem $\ref{thm7}$'s proof can, of course, be read backwards until Equation $\ref{eqn53}$. As a result, Equation $\ref{eqn53}$'s right-hand side converges to that of Equation $\ref{eqn52}$ as $\alpha\rightarrow0^{+}$.
		Therefore, Equation $\ref{eqn51}$ agrees with the definitions and formulations of Zeta functions.
	\end{rem}
	\section{Conclusions}
	This paper presents a comprehensive exploration of the properties of the Hurwitz Zeta function's fractional derivative, utilizing generalization to the Complex variable of the Grünwald-Letnikov Fractional derivative that adheres the Generalized Leibniz rule. The paper unveils a remarkable identity by iteratively applying the Generalized Leibniz rule, thereby extending the Hurwitz Zeta function's fractional functional equation to a Generalized form. Additionally, the study provides several Formulations and Simplifications of the Fractional Functional Equation of Hurwitz Zeta function, enriching our understanding of its intricate nature.
	
	In a captivating extension of the analysis, the paper delves into the Fractional derivative and Jacobi theta function's Functional Equation. Through meticulous derivation, it establishes a connection between Jacobi theta function's Functional Equation and the Fractional derivative. Furthermore, a fascinating relation between Riemann Zeta function's fractional derivative and the renowned Jacobi theta function is revealed. This connection opens up new avenues for exploration and deepens our understanding of the intricate relationships among these mathematical entities.
	\section{Acknowledgements}
	The authors are grateful to the editors for their careful reading of our manuscript. 
	\subsection{Statements and Declaration}
	\subsubsection{Funding} 
	There was no external funding for this study.
	\subsubsection{Competing Interests} 
	The authors declare that none of their known personal or financial conflicts could have possibly affected the research described in this work.
	\subsubsection{Author Contributions} 
	All authors made contributions to the conception and design of the study. [Ashish Mor] was in charge of gathering information, analysing it, and preparing the materials. All contributors offered input on earlier draughts of the manuscript after [Surbhi Gupta] produced the first draught and then [Manju Kashyap] approved the final draft. All authors read through and approved the final manuscript.
	\subsubsection{Data Availability} 
	No data required for this article.

\end{document}